\newcommand{\calW}{{\mathcal{W}}}
\newcommand{\calC}{\mathcal{C}}
\newcommand{\calF}{\mathcal{F}}
\newcommand{\calO}{\mathcal{O}}
\newcommand{\calE}{\mathcal{E}}
\newcommand{\calJ}{\mathcal{J}}
\newcommand{\calR}{\mathcal{R}}
\newcommand{\calS}{\mathcal{S}}
\newcommand{\frq}{\mathfrak{q}}
\newcommand{\frakm}{\mathfrak{m}}
\newcommand{\C}{\mathbb{C}}
\newcommand{\R}{\mathbb{R}}
\newcommand{\N}{\mathbb{N}}
\newcommand{\frakg}{\mathfrak{g}}
\newcommand{\fWeyl}{\mathbb W}
\newcommand{\Sym}{\operatorname{Sym}}
\newcommand{\cSym}{\operatorname{\widehat{Sym}}}
\newcommand{\supp}{\operatorname{supp}}
\newcommand{\jet}{\mathsf{J}}
\numberwithin{equation}{section}
\theoremstyle{plain}
        \newtheorem{theorem}{Theorem}[section]
        \newtheorem{lemma}[theorem]{Lemma}
        \newtheorem{proposition}[theorem]{Proposition}
\theoremstyle{definition}
        \newtheorem{definition}[theorem]{Definition}
        \newtheorem{remark}[theorem]{Remark}
        \newtheorem{example}[theorem]{Example}
\title{Quantization of Whitney functions and reduction}
\author{M.J.~Pflaum, H. Posthuma,~\textrm{and} X.~Tang}
\begin{document}
\begin{abstract} 
 For a possibly singular subset of a regular Poisson manifold we construct a deformation quantization
 of its algebra of Whitney functions. We then extend the construction of a deformation quantization 
 to the case where the underlying set is a subset of a not necessarily regular Poisson
 manifold which can be written as the quotient of a regular Poisson manifold on which a compact Lie group 
 acts freely by Poisson maps. Finally, if the quotient Poisson manifold is regular as well, we show
 a ``quantization commutes with reduction'' type result. 
 For the proofs, we use methods stemming from both singularity theory and Poisson geometry. 
\end{abstract}
\dedicatory{The paper is dedicated to David Trotman  on the occasion of his 60ies birthday.}
\address{\newline
Markus J. Pflaum, {\tt markus.pflaum@colorado.edu}\newline
         \indent {\rm Department of Mathematics, University of Colorado,
         Boulder, USA}\newline
        Hessel Posthuma, {\tt h.b.posthuma@uva.nl}\newline
         \indent {\rm Korteweg-de Vries Institute for Mathematics, University
         of Amsterdam, The Netherlands} \newline
        Xiang Tang, {\tt xtang@math.wustl.edu}   \newline
         \indent {\rm  Department of Mathematics, Washington University,
         St.~Louis, USA}}
\maketitle
\section*{Introduction}
In this paper we consider the synthesis of two, seemingly different, branches of mathematics, namely 
that of singularity theory and Poisson geometry and deformation quantization. There are motivations from 
both sides to consider such a blend: from the point of view of Poisson geometry and mathematical 
physics, singularities naturally appear when one considers Poisson manifolds with symmetries of which one wants to take the quotient. From the point of view of singularity theory, the general idea that
a quantization can act as a kind of ``noncommutative desingularization'' has had quite a few striking 
applications. To make proper sense of this idea one needs to combine this with techniques coming from 
noncommutative geometry.

In this paper we use the notion of Whitney functions to describe  the deformation quantization of a (singular) set inside a Poisson manifold.  More specifically, we describe how the Fedosov method 
applies to construct such deformation quantizations inside a regular Poisson manifold, and prove a
``quantization commutes with reduction'' type of result for the quantized Whitney functions invariant 
under a free action of a compact Lie group that preserves the Poisson structure. 
\section{Formal deformation quantizations of  Whitney functions}
Recall that for a closed subset $X\subset M$ of a smooth manifold $M$ the 
algebra of Whitney 
functions on $X$ is defined as the quotient  
$\calE^\infty (X;M) := \calC^\infty (M) / \calJ^\infty (X,M)$, where  
\[
  \calJ^\infty (X,M) := \big\{ f \in \calC^\infty (M) \mid (Df)_{|X} = 0  
  \text{ for every differential operator $D$ on $M$}\big\}
\] 
denotes the ideal of smooth functions on $M$ which are flat  on $X$. 
If no confusion about the ambient space can arise, we briefly write $\calE^\infty (X)$ 
instead of $\calE^\infty (X;M)$. Moreover, we denote the canonical quotient 
map from $\calC^\infty (M)$ to $\calE^\infty (X;M)$, sometimes called the \emph{jet map}, 
by $\jet_{X;M}$ or $\jet_X$, if no confusion can ariese.
Finally observe that if $\Phi : M \rightarrow N$ is a smooth map between manifolds $M$ and $N$ 
which maps the closed subset $X\subset M$ into a closed subset $Y\subset N$, then there is 
a canonical \emph{pull-back map} for Whitney functions
\[
  \Phi^* : \calE^\infty (Y;N) \rightarrow \calE^\infty (X;M), 
\]
which maps the Whitney function $F = \jet_{Y;N} (f)$ represented by $f \in \calC^\infty (N)$ to the 
Whitney function $\jet_{X;M} \big( f\circ \Phi\big) $.  The reader will easily 
check that the pull-back  is well-defined.

Recall further that by a \emph{Whitney--Poisson} structure  on $X$ one understands 
a bilinear map $\{ - , - \}$ on $\calE^\infty (X)$ which satisfies  for all $F,G,H \in \calE^\infty (X)$
the following relations
\begin{enumerate}[(WP1)]
  \item \label{IteAntisymmetry}
  $\{F, G\} = - \{ G , F\}$,
  \item \label{IteDerivation}
  $\{F, GH\}=\{F,G\}H+G\{F,H\}$,  and 
  \item \label{IteWPJacobi}
  $\{\{F,G\}, H\}+\{\{H,F\},G\}+\{\{G,H\},F\}=0$. 
\end{enumerate}
In other words, (WP\ref{IteAntisymmetry}) tells that $\{ - , - \}$ is an antisymmetric bilinear form, 
(WP\ref{IteDerivation}) says that $\{ - , - \}$ is a derivation in each of its arguments, 
and (WP\ref{IteWPJacobi}) is the Jacobi identity. Hence there exists a smooth antisymmetric bivector field
$ \Lambda : X \rightarrow TM \otimes TM$ such that 
\[
  \{F, G\}  = \Lambda \, \lrcorner \, (dF \otimes dG ) \quad \text{for all $F,G \in \calE^\infty (X)$}.
\]
Note that we have used here the fact that $\calJ^\infty (X,M) \Omega^\bullet (M)$ is a graded 
ideal in  $\Omega^\bullet (M)$ preserved by the exterior derivative $d$ which gives rise to the differential graded 
quotient algebra  $\Omega^\bullet_{\calE^\infty} (X) := \Omega^\bullet (M)/\calJ^\infty (X;M) \Omega^\bullet (M)$.
Its differential will be denoted again by $d$. We call $\Omega^\bullet_{\calE^\infty} (X) $ the complex of 
Whitney--de Rham forms  on $X$. According to \cite{BraPfl}, the cohomology of 
$\Omega^\bullet_{\calE^\infty} (X)$ coincides with the singular cohomology (with values in $\R$), if $M$ 
is an analytic manifold, and $X\subset M$ a subanalytic subset. 
Now we have the means to define what one understands by a formal deformation quantization of 
the algebra of Whitney functions.
\begin{definition} 
  Assume that $X \subset M$ is a closed subset of the smooth manifold $M$, and that 
  $\calE^\infty (X)$ carries a Whitney--Poisson structure. 
  By a \emph{formal deformation quantization} of the algebra $\calE^\infty (X)$  or in other words 
  by a \emph{star product} on $\calE^\infty (X)$ one then understands an associative product 
  \[ 
   \star : \calE^\infty (X)[[\hbar]] \times \calE^\infty (X)[[\hbar]] \rightarrow \calE^\infty (X)[[\hbar]] 
  \]
  on the space $\calE^\infty (X)[[\hbar]]$ of formal power series in the variable $\hbar$ with coefficients 
  in $\calE^\infty (X)$ such that the following is satisfied:
  \begin{enumerate}[(DQ1)]
  \setcounter{enumi}{-1}
  \item
      The product $\star $ is $\R[[\hbar]]$-linear and $\hbar$-adically continuous in each argument.
  \item
     There exist $\R$-bilinear operators $c_k : \calE^\infty (X) \times \calE^\infty (X) \rightarrow \calE^\infty (X)$, $k\in \N$ 
     such that $c_0 $ is the standard commutative product on $\calE^\infty (X)$ and such that for all $F,G \in \calE^\infty (X)$ 
     there is an expansion of the product $ F \star G $ of the form
     \begin{equation}
     \label{eq:starprodexp}
       F \star G = \sum_{k\in \N} c_k (F,G) \hbar^k .
     \end{equation}
  \item 
     The constant function $1 \in \calE^\infty$ satisfies $1 \star F = F \star 1 = F$  for all $F \in \calE^\infty (X)$.
  \item
     The star commutator $[ F, G]_\star := F\star G - G \star F$ of two Whitney functions $F,G \in \calE^\infty (X)$
     satisfies the  commutation relation 
     \[
        [ F, G]_\star = - i \hbar \{ F, G\} + o(\hbar^2) .
     \]
  \end{enumerate}
    If in addition the  condition
   \begin{enumerate}[(DQ1)]
   \setcounter{enumi}{3}
  \item 
    $\supp (F \star G) \subset \supp (F) \cap \supp (G)$ for all $F,G \in \calE^\infty (X)$,
  \end{enumerate}
    is satisfied,  then the star product is called \emph{local}. 
\end{definition}
\begin{remark}
  If $\Pi$ is a Poisson bivector on the smooth manifold $M$, then the ideal $\calJ^\infty (X;M)$ is even a Poisson 
  ideal in $\calC^\infty (M)$. This implies that the Poisson bracket on $\calC^\infty (M)$ factors to the 
  quotient $\calE^\infty (X)$. 
  We denote the inherited Poisson bracket on $\calE^\infty (X)$ also by $\{ - , - \}$, and call it 
  a \emph{global Whitney--Poisson structure}.  
\end{remark}
\vspace{2mm} 

Now let us describe a method for constructing a formal deformation quantization  of the algebra 
$\calE^\infty (X)$ in case 
$(M,\Pi)$ is a regular Poisson manifold 
and  $\calE^\infty (X)$ carries the corresponding global Whitney--Poisson structure. 
This method generalizes the original construction by Fedosov \cite{FedDQIT} to the Whitney function case,
and has been explained in detail by the authors in \cite{PflPosTanQWF} for the particular case where 
the Poisson bivector comes from a symplectic structure. 
Recall that $(M,\Pi)$ being a regular Poisson manifold means that the Poisson tensor field 
$\Pi : M \rightarrow TM \otimes TM$ has constant rank; see \cite{FedDQIT,VaiLGPM} for 
more details on regular Poisson manifolds. Moreover, regularity of $\Pi$ implies that $M$ is 
foliated in a natural way by symplectic manifolds. Denote by $\calS$ the foliation of $M$ by symplectic 
leaves which is induced by the regular Poisson tensor $\Pi$, and by $T\calS\rightarrow M$ the 
subbundle of $TM$ of all tangent vectors tangent to the symplectic leaves of the foliation. 
The following result then holds true. For its original proof we refer to Fedosov \cite{FedDQIT};
here we present a proof which also covers the later needed case of a regular Poisson manifold 
with a compatible $G$-action.  

\begin{proposition}[cf.~{\cite[Sec.~5.7]{FedDQIT}}]
\label{PropPoissonConn}
 For every regular Poisson manifold $(M,\Pi)$ there exists \emph{Poisson connection} which means a 
 connection 
 \[
   \nabla : \Gamma^\infty (T \calS) \rightarrow  \Gamma^\infty (T\calS \otimes T^*\calS)
 \]
 which leaves the Poisson bivector $\Pi$ invariant in the sense that 
 \[
   \nabla \Pi  = 0 .
 \]
 Moreover, if a compact Lie group $G$ acts on $M$ by Poisson maps, the Poisson connection
 $\nabla$ can be chosen to be invariant. 
\end{proposition}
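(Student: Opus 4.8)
The plan is to reduce the invariance condition $\nabla\Pi = 0$ to a leafwise-symplectic one, to build the connection by a pointwise algebraic correction of an arbitrary leafwise connection, and then to average over $G$ in the equivariant case. Since $\Pi$ has constant rank, the induced bundle map $\Pi^\sharp : T^*\calS \to T\calS$ is a fibrewise isomorphism, so its inverse defines a global leafwise symplectic form $\omega \in \Gamma^\infty(\Lambda^2 T^*\calS)$. A leafwise connection $\nabla$ on $T\calS$ induces one on every associated bundle, and because $\Pi$ and $\omega$ are mutually inverse, $\nabla\Pi = 0$ holds if and only if $\nabla\omega = 0$. Hence it suffices to produce a leafwise connection annihilating $\omega$.

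First I would fix an arbitrary leafwise connection $\nabla^0$ on $T\calS$, which exists by a partition-of-unity argument gluing the flat leafwise connections coming from foliation charts. The tensor $A := \nabla^0\omega \in \Gamma^\infty(T^*\calS \otimes \Lambda^2 T^*\calS)$ measures the failure of $\nabla^0$ to preserve $\omega$. I then seek a correction $\nabla = \nabla^0 + S$ with $S \in \Gamma^\infty(T^*\calS \otimes \End(T\calS))$, so that for leafwise vector fields $X,Y,Z$
\[
  (\nabla_Z \omega)(X,Y) = A(Z;X,Y) - \omega(S_Z X, Y) - \omega(X, S_Z Y) .
\]
Defining $S_Z \in \End(T\calS)$ by nondegeneracy of $\omega$ through
\[
  \omega(S_Z X, Y) = \tfrac{1}{2}\, A(Z;X,Y),
\]
and using that $A$ is antisymmetric in its last two slots, a one-line computation gives $\nabla_Z\omega = 0$, hence $\nabla\Pi = 0$. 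If one additionally wants $\nabla$ torsion-free, as in Fedosov's setup, one starts from a torsion-free $\nabla^0$ (obtained by antisymmetrizing any leafwise connection) and replaces the symmetric part of the correction by the standard cyclic formula for symplectic connections; this changes nothing in what follows.

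For the equivariant statement, observe that a Poisson diffeomorphism maps symplectic leaves to symplectic leaves, so the $G$-action preserves the subbundle $T\calS$ and satisfies $g^*\Pi = \Pi$ for all $g \in G$. Given any Poisson connection $\nabla$ produced above, each pullback $g^*\nabla$ is again a leafwise connection with $(g^*\nabla)\Pi = g^*(\nabla\Pi) = 0$. I then average over the compact group with normalized Haar measure,
\[
  \nabla^G := \int_G g^*\nabla \; dg .
\]
Since leafwise connections form an affine space modeled on $\Gamma^\infty(T^*\calS \otimes \End(T\calS))$ and the assignment $\nabla \mapsto \nabla\Pi$ respects affine combinations with coefficients summing to one (here $\int_G dg = 1$), the connection $\nabla^G$ is $G$-invariant and still satisfies $\nabla^G\Pi = 0$.

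The argument is essentially a fibrewise version of the existence of symplectic connections, so no step is a genuine obstacle; the points demanding care are the bookkeeping of the leafwise (partial) nature of $\nabla$ — in particular verifying that $A$ is antisymmetric in its last two arguments, which is exactly what makes the half-correction solve the equation — and checking that the averaging respects both the leafwise structure (guaranteed by $G$ preserving $T\calS$) and the affine condition $\nabla\Pi = 0$.
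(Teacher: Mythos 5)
Your proof is correct, but it differs from the paper's in one cosmetic and one substantive way. The existence step rests on the same mechanism as in the paper: take a reference leafwise connection and correct it by a tensor obtained from the covariant derivative of the leafwise symplectic form $\omega$ by raising an index with $\omega$; your half-correction $\omega(S_Z X, Y) = \tfrac{1}{2}(\nabla^0_Z\omega)(X,Y)$ does kill $\nabla\omega$, since antisymmetry of $\nabla^0_Z\omega$ gives $\omega(S_Z X,Y)+\omega(X,S_Z Y)=(\nabla^0_Z\omega)(X,Y)$, and the duality $\nabla\omega=0 \Leftrightarrow \nabla\Pi=0$ is exactly what the paper uses implicitly. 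The cosmetic difference is the choice of reference connection: the paper takes the leafwise Levi--Civita connection $\nabla^{\textup{LC}}$ of a Riemannian metric $\eta$ and corrects it by a tensor $\Delta$ defined through $\omega$-duality, rather than an arbitrary partial connection glued by a partition of unity. The substantive difference is the treatment of equivariance: the paper requires $\eta$ to be $G$-invariant from the outset, so that every ingredient of the construction ($\eta$, hence $\nabla^{\textup{LC}}$, together with $\omega$ and $\Pi$) is invariant and the resulting Poisson connection is $G$-invariant with no further argument; compactness of $G$ enters only through the existence of an invariant metric. You instead decouple existence from equivariance and restore invariance afterwards by Haar averaging, using that $\nabla \mapsto \nabla\Pi$ respects affine combinations and that $G$ preserves $T\calS$ --- a fact the paper isolates as a separate lemma in Section 2, and which you correctly justify via $g_*\Pi = \Pi$. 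Both routes are sound: the paper's is more economical, as one construction delivers both claims of the proposition at once, while yours is more modular, showing that \emph{any} Poisson connection can be made invariant by averaging regardless of how it was built, at the price of an extra (standard) averaging step. Your aside on torsion-freeness is harmless; the proposition does not demand it, and the cyclic (Hess-type) correction you mention is indeed the standard way to arrange it when needed.
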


\begin{proof}
  Choose a riemannian metric $\eta$ on $M$ which is required to be $G$-invariant, if $M$ carries a 
  $G$-action compatible with the Poisson structure. 
  Denote by
  \[
   \nabla^\textup{LC} : \Gamma^\infty (T\calS) \times \Gamma^\infty (T\calS) \rightarrow \Gamma^\infty (T\calS)
  \]
  the leafwise Levi--Civita connection of the riemannian metric restricted to $\calS$. 
  Moreover, let $\omega : T\calS \otimes T\calS \rightarrow  \R$ be the leafwise symplectic structure 
  induced by the Poisson bivector. Now we define a tensor field 
  $\Delta' \in \Gamma^\infty (T^*\calS \otimes T^*\calS \otimes T^*\calS )$ by 
  \[
    \Delta' (X,Y,Z) = \nabla^\textup{LC}_Z \omega (X,Y) - \nabla^\textup{LC}_Y \omega (X,Z) 
    \quad \text{for all $X,Y,Z \in \Gamma^\infty (T\calS)$}. 
  \]
  We then let $\Delta \in \Gamma^\infty (T^*\calS \otimes T^*\calS \otimes T\calS )$ be the tensor field 
  such that 
  \[
   \omega \big( X, \Delta (Y,Z) \big) =  \Delta' (X,Y,Z)  
   \quad \text{for all $X,Y,Z \in \Gamma^\infty (T\calS)$}. 
  \]
  By construction it is clear that $\Delta'$ and $\Delta$ are both $G$-invariant, if $\Pi$ 
  and $\eta$ (and hence $\omega$) are. 
  Now we put
  \[
    \nabla_X Y := \nabla^\textup{LC}_X Y + \Delta (X,Y) \quad \text{for  $X,Y \in \Gamma^\infty (T\calS)$}.
  \]
  One readily checks that $\nabla$ is a Poisson connection, and $G$-invariant, if 
  $\Pi$ and $\eta$ are. 
\end{proof}

Next, we consider the  Weyl algebra bundle $\fWeyl_\calS M  \rightarrow M$ over $M$ along the symplectic 
foliation $\calS$.
Its typical fiber over $p\in M$ is given by
\[
  \fWeyl_{\calS,p} M := \fWeyl  (T_p\calS ) := \cSym (T^*_p\calS)[[\hbar]],
\]
the space of formal power series in $\hbar$ with coefficients
in the space of  Taylor expansions at the origin of smooth functions on the fiber $S_p$ of $S$ over $p$.
In other words, $\cSym (T^*_p\calS)$ coincides with the $\frakm$-adic completion of the space 
$\Sym (T^*_p\calS)$ of polynomial functions on $T_pS$, where $\frakm $ denotes the maximal 
ideal in $\Sym (T^*_p\calS)$.
Hence, every element $a$ of $\fWeyl  (T_p \calS) $ can be uniquely expressed in the form
\begin{equation}
\label{eq:fWeylrep}
  a = \sum_{s\in \N , \, k\in \N} a_{s,k} \hbar^k ,
\end{equation}
where each $a_{s,k}$ is an element of $\Sym^s (T^*_p\calS)$, which can be naturally identified with 
the space of $s$-homogeneous polynomial functions on $T_p\calS$. 
A section $a \in \calW_\calS (M) := \Gamma^\infty (\fWeyl_\calS M)$ can be uniquely written in the form 
\eqref{eq:fWeylrep}, where the $a_{s,k}$ with $s,k\in\N$  now are smooth sections of the symmetric powers 
$\Sym^s (T^*\calS)$. This representation allows us to define the \emph{symbol map} 
$\sigma: \calW_\calS \rightarrow \calC^\infty (M)[[\hbar]]$ by 
\begin{equation}\label{eq:symbol}
  \sigma (a ) = \sum_{k\in \N} a_{0,k} \hbar^k \quad \text{for $a\in \calW$}.
\end{equation}
The space $\fWeyl  (T_p \calS ) $ is filtered by the \emph{Fedosov-degree}
\[
  \deg_\textup{F} (a) := \min\{s +2k \mid a_{s,k}\neq 0 \}, \quad 
  a \in \fWeyl  (T_p \calS). 
\] 
The Fedosov-degree induces a filtration of the space of sections  $\calW_\calS (M)$ of the Weyl algebra 
bundle along $\calS$ by putting 
\[
  \calF^k \calW_\calS (M) := 
   \{ a \in \calW (M) \mid \deg_\textup{F} (a(p)) \geq k \text{ for all $p \in M$}  \} .
\]
Now consider $\Omega^\bullet \fWeyl_\calS  $, the sheaf of leafwise smooth differential forms with  values 
in the bundle $\fWeyl_\calS M$, or in other words the sheaf of smooth sections of the (profinite 
dimensional) vector bundle $\fWeyl_\calS M \otimes \Lambda^\bullet T^*\calS$. Like $\calW_\calS (M)$, 
the space $\Omega^\bullet \calW_\calS (M)$ is also filtered by the Fedosov-degree. 

Next, we define a non-commutative algebra structure on $\calW_\calS (M)$ and $\Omega^\bullet\calW_\calS (M)$.
To this end observe first that the Poisson bivector $\Pi (p) $ on $T_pM$ is linear and can be 
written in the form  
\begin{equation}
\label{eq:Poissonrep}
  \Pi (p) = \sum_{i=1}^{\frac{\dim S_p}{2}} \Pi_{i1}(p) \otimes \Pi_{i2} (p),
\end{equation}
where $\Pi_{i1}(p),\Pi_{i2} (p)\in T_p\calS$ for $i=1,\cdots , \operatorname{rk} (\Pi)$.
Since each of the tangent vectors $\Pi_{i1}(p),\Pi_{i2} (p)$ acts as a derivation on $\Sym (T^*_p\calS)$,
this gives rise to the operator
\begin{equation}
\begin{split}
  \widehat{\Pi} (p) : \: & \Sym (T^*_p\calS) \otimes\Sym (T^*_p\calS) \rightarrow 
  \Sym (T^*_p\calS) \otimes \Sym (T^*_p\calS), \\ 
  & a \otimes b \mapsto \sum_{i=1}^{\frac{\operatorname{rk} (\Pi) }{2} } \Pi_{i1}(p) \cdot a \otimes \Pi_{i2} (p) \cdot b .
\end{split}
\end{equation}
Th operator $\widehat{\Pi} (p)$ does not depend on the particular representation \eqref{eq:Poissonrep}. 
Note that by $\C[[\hbar]]$-linearity and $\frakm$-adic continuity, $\widehat{\Pi}$ uniquely extends to an operator
\[
 \widehat{\Pi} (p) :  \cSym (T^*_p\calS)[[\hbar]] \otimes \cSym (T^*_p\calS)[[\hbar]] 
  \rightarrow  \cSym (T^*_p\calS)[[\hbar]] \otimes \cSym (T^*_p\calS)[[\hbar]] .
\] 
The so-called \emph{Moyal--Weyl product} (see \cite{BayetalDTQIuII}) of two elements 
$a, b\in \fWeyl(S_p)$ and is given  by
\begin{equation}
  a \circ_p b := \sum \frac{ (- i \hbar)^k}{k!} \mu \big( \widehat{\Pi} (p)
  ( a \otimes b ) \big) .
\end{equation}
One checks easily that $\circ_p$ is a star product on $\fWeyl  (S_p)$. 
Moreover, this fiberwise star product extends naturally to a noncommutative product $\star$ on
$\calW_\calS (M)$, called the \emph{Moyal--Weyl product} on the Weyl algebra bundle. 
For $a, b \in \calW_\calS  (M) $ it is given by  
\begin{equation}
  \label{Eq:MoyWeylWAB}
  a \circ b (p) :=  a(p) \circ_p b (p) \quad \text{for $p\in M$}.
\end{equation}
Note that the Moyal--Weyl product on $\calW_\calS (M) $ satisfies by construction 
\begin{equation}
  \label{Eq:CommRelMoyProd}
  [ a, b]_\circ : = a \circ b - b \circ a = - i \hbar \{ a , b \} + o(\hbar^2) \: . 
\end{equation}
This indicates that $\calW_\calS (M)$ is already a kind of ``formal deformation quantization'', but just 
too big. It was Fedosov's fundamental idea to construct an appropriate flat connection $D$ on 
$\calW_\calS (M)$ such that the subalgebra of flat sections, i.e.~of sections $a$ such that $Da =0$, is 
isomorphically mapped by the symbol map onto $\calC^\infty (M)[[\hbar]]$ and thus induces a star product 
on $\calC^\infty (M)[[\hbar]]$.  Let us explain Fedosov's construction of $D$.

We chooses  a Poisson connection $\nabla$ according to Prop.~\ref{PropPoissonConn}, which canonically 
lifts to a connection 
\[
  \nabla : \Omega^\bullet \fWeyl_\calS (M ) \rightarrow \Omega^{\bullet + 1} \fWeyl_\calS ( M ).
\]
Fedosov \cite[Sec. 5.2]{FedDQIT} proved that there exists a section $A \in \Omega^1 \calW_\calS (M)$ such that the connection
\begin{equation}
  \label{eq:FedConn}
   D : = \nabla + \frac{i}{\hbar} [ A, - ]_\circ
\end{equation}
is abelian, i.e.~satisfies $D \circ D = 0$. 
Such an abelian connection $D$ defined by a  $1$-form $A$
will be called a \emph{Fedosov connection}.  

We briefly explain the uniqueness of the star product.  Let $\{x^1, \cdots, x^{\operatorname{rk}(\Pi)}\}$ be 
leafwise coordinates along $\calS$, and $\{y^1, \cdots, y^{\operatorname{rk}(\Pi)}\}$ be the dual elements in 
$T^*\calS$. Define $\delta: \Omega^\bullet \fWeyl_\calS(M)\to \Omega^{\bullet+1}\fWeyl_\calS(M)$ and 
$\delta^*:\Omega^\bullet\fWeyl_\calS(M)\to \Omega^{\bullet-1}\fWeyl_\calS(M)$ by 
\[
  \delta a=\sum_{k=1}^{\operatorname{rk}(\Pi)}dx^k\wedge\frac{\partial a}{\partial y^k}, \qquad 
  \delta^*a= \sum_{k=1}^{\operatorname{rk}(\Pi)}y^k\: \iota_{\! \frac{\partial}{\partial x^k}}a. 
\]
Given an abelian connection $D$ of the form (\ref{eq:FedConn}), 
direct computation shows that there is a canonical element $\Omega_D \in \calW_\calS(M)$, called the 
\emph{curvature of $D$}, associated to the Poisson connection $\nabla$ (cf.~Prop.~\ref{PropPoissonConn}) and $A$ such that 
\[
D^2=\frac{i}{\hbar}[\Omega_{D}, -]_\circ .
\]
Let $\Omega^\bullet_\calS(M, \mathbb{R}[[\hbar]])$ be the space of leafwise differential forms along $S$ with coefficient in $\mathbb{R}[[\hbar]]$. As $D^2=0$, $\Omega_{D}$ is in the center of $\calW_\calS(M)$, and therefore an element in $\Omega_\calS^2(M, \mathbb{R}[[\hbar]])$ closed under the de Rham differential.  
{\sc Fedosov} \cite[Thm.~5.2.2]{FedDQIT} proved that under the requirements
\begin{enumerate} 
\item $\deg_\textup{F} (A) \geq 2$,
\item $\delta^* A=0$,
\end{enumerate} 
there is a unique Fedosov connection $D$ associated to a given Poisson connection $\nabla$
which has the given curvature form $\Omega$. In what follows, we will always assume to work with Fedosov connections with the above assumptions. 


Let us fix a Fedosov connection $D$ and consider the space 
\[
 \calW_D (M) := \{ a \in \calW_\calS (M) \mid Da = 0 \}
\]
of flat sections of the Weyl algebra bundle. $\calW_D(M)$ is a subalgebra of $\calW |_\calS M$ as $D$ is 
a derivation. 
{\sc Fedosov} \cite{FedDQIT} observed that the restriction of the symbol map (\ref{eq:symbol})
\[
  \sigma_{|\calW_D (M)} :  \calW_D (M) \rightarrow \calC^\infty (M)[[\hbar]]
\]
is a linear isomorphism. Let 
\[
  \frq :   \calC^\infty (M)[[\hbar]]  \rightarrow \calW_D (M)
\]
be its inverse, the so-called \emph{quantization map}. 
Then there exist uniquely determined differential operators 
$\frq_k :  \calC^\infty (M) \rightarrow  \calC^\infty (M)$ such that 
\begin{equation}
\label{eq:expquantmap}
   \frq (f) = \sum_{k\in \N} \frq_k (f) \hbar^k \quad 
   \text{for all $f \in \calC^\infty (M)$},
\end{equation}
and 
\[
  \star : \calC^\infty (M)[[\hbar]]  \times \calC^\infty (M)[[\hbar]] , \quad
  (f , g) \mapsto \sigma \big(  \frq (f) \circ \frq (g) \big)
\] 
is a star product on $\calC^\infty (M)$.

Now observe that the Fedosov connection $D$ leaves the module 
$\calJ^\infty (X;M) \cdot \Omega^\bullet (M;\fWeyl_\calS M)$ invariant.   
This implies that $D$ factors to the quotient 
\[ 
  \Omega^\bullet_{\calE^\infty} (X;\fWeyl_\calS M) := 
  \Omega^\bullet (M;\fWeyl_\calS M) / \calJ^\infty (X;M) \cdot \Omega^\bullet (M;\fWeyl_\calS M),
\]
and acts on 
$\calE^\infty (X;\fWeyl_\calS M) := \calW_\calS ( M) / \calJ^\infty (X;M) \cdot \calW_\calS  (M)$. 
Moreover, the symbol map $\sigma$ maps  
$\calJ^\infty (X;M)\cdot \calW (M)$ to $\calJ^\infty (X;M)[[\hbar]]$, and 
$\frq \big( \calJ^\infty (X;M)[[\hbar]]\big)$ is contained in $\calJ^\infty (X;M)\cdot \calW (M)$,
since in the expansion \eqref{eq:expquantmap} the operators $\frq_k$ are all differential operators.  
Hence $\sigma$ and $\frq$ factor to $\calE^\infty (X;\fWeyl M)$ respectively 
$\calE^\infty (X)[[\hbar]]$. This entails the following result, which generalizes
\cite[Thm.~1.5]{PflPosTanQWF} to the regular Poisson case. 


\begin{theorem}\label{thm:starprod}
  Let $(M,\Pi)$ be a regular Poisson manifold, and $\nabla$ a Poisson connection. 
  Let $D = \nabla + A $ the corresponding Fedosov connection on $\Omega^\bullet \fWeyl_\calS $,
  and $X\subset M$ a closed subset. Then the space of flat sections 
  \[
    \calW_D (X) := \{ a \in \calE^\infty (X;\fWeyl_\calS M)  \mid Da = 0\}
  \] 
  is a subalgebra of $\calE^\infty (X;\fWeyl_\calS M)$, and the symbol map induces an isomorphism of 
  linear spaces $\sigma_X : \calW_D (X) \rightarrow \calE^\infty(X)[[\hbar]]$. 
  Moreover, the unique product $\star_X$ on 
  $\calE^\infty(X)[[\hbar]]$ with respect to which $\sigma_X$ becomes an isomorphism of algebras 
  is  a formal deformation quantization of $\calE^\infty(X)$.
\end{theorem}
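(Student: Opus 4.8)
The plan is to push Fedosov's fibrewise construction down to the Whitney quotient, exploiting throughout the single structural fact that every operator appearing in it is differential along the base $M$ and algebraic in the Weyl fibre, and therefore maps the flat ideal $\calJ^\infty(X;M)$ into itself. For the subalgebra assertion I would first note that $\calJ^\infty(X;M)\cdot\calW_\calS(M)$ is a two-sided ideal for the Moyal--Weyl product: by \eqref{Eq:MoyWeylWAB} the product $\circ$ differentiates only in the fibre directions through $\widehat\Pi$, so a central scalar factor $f\in\calJ^\infty(X;M)$ passes through unchanged, giving $(f a)\circ b=f\,(a\circ b)$ and $b\circ(f a)=f\,(b\circ a)$. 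Hence $\circ$ descends to $\calE^\infty(X;\fWeyl_\calS M)$, and as $D=\nabla+\tfrac{i}{\hbar}[A,-]_\circ$ is a $\circ$-derivation that likewise descends, its kernel $\calW_D(X)$ is a $\circ$-subalgebra.

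The heart of the proof is the isomorphism $\sigma_X\colon\calW_D(X)\to\calE^\infty(X)[[\hbar]]$. The descended quantization map $\frq_X$, sending $\jet_X(f)$ to $\jet_X(\frq(f))$, is well defined because $\frq(\calJ^\infty(X;M)[[\hbar]])\subset\calJ^\infty(X;M)\cdot\calW_\calS(M)$, and it takes values in $\calW_D(X)$ since $D\frq(f)=0$ upstairs; the relation $\sigma\circ\frq=\id$ then descends to $\sigma_X\circ\frq_X=\id$, so $\sigma_X$ is surjective. The delicate point is \emph{injectivity}, for upstairs $\frq\circ\sigma$ equals the identity only on the flat subalgebra $\calW_D(M)$ and so cannot be pushed to the quotient directly. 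To handle it I would carry Fedosov's contracting homotopy $h$---assembled from $\delta^*$, the connection $\nabla$, and the inner derivation by $A$---through the quotient: each constituent preserves $\calJ^\infty(X;M)\cdot(\,\cdot\,)$, hence so does $h$, and the standard identity $\tilde b-\frq(\sigma(\tilde b))=hD\tilde b$ on sections survives. Now let $b\in\calW_D(X)$ satisfy $\sigma_X(b)=0$ and pick any lift $\tilde b\in\calW_\calS(M)$; then $D\tilde b\in\calJ^\infty(X;M)\cdot\Omega^1\calW_\calS(M)$ and $\sigma(\tilde b)\in\calJ^\infty(X;M)[[\hbar]]$. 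The first gives $\tilde b-\frq(\sigma(\tilde b))=hD\tilde b\in\calJ^\infty(X;M)\cdot\calW_\calS(M)$, the second gives $\frq(\sigma(\tilde b))\in\calJ^\infty(X;M)\cdot\calW_\calS(M)$, and adding them places $\tilde b$ in the flat ideal, so $b=0$. Equivalently, one is computing the $D$-cohomology of the Whitney--Fedosov complex $\Omega^\bullet_{\calE^\infty}(X;\fWeyl_\calS M)$ and finding it concentrated in degree zero, where it equals $\calE^\infty(X)[[\hbar]]$.

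It remains to check the quantization axioms for the transported product $F\star_X G:=\sigma_X\big(\frq_X(F)\circ\frq_X(G)\big)$, which is associative since $\circ$ is and $\sigma_X$ is an algebra isomorphism onto $\calW_D(X)$. Axioms (DQ0) and (DQ2) follow from $\R[[\hbar]]$-bilinearity of $\circ$ and from $\frq(1)=1$. Because Fedosov's star product on $\calC^\infty(M)[[\hbar]]$ is given by bidifferential operators, and bidifferential operators send $\calJ^\infty(X;M)$ into itself in each slot, the expansion \eqref{eq:starprodexp} descends term by term with $c_0$ the pointwise product, which is (DQ1); the commutation relation \eqref{Eq:CommRelMoyProd} descends under $\sigma_X$ to $[F,G]_{\star_X}=-i\hbar\{F,G\}+o(\hbar^2)$ for the inherited global Whitney--Poisson bracket, which is (DQ3). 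I expect the homotopy descent of the middle step to be the only real obstacle; the surrounding arguments are formal once one knows that operators differential on $M$ and algebraic in the fibre respect flatness along $X$.
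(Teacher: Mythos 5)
Your proposal is correct and follows essentially the same route as the paper, whose entire proof consists of the observations stated immediately before the theorem: the Fedosov connection $D$, the symbol map $\sigma$, and the quantization map $\frq$ are all built from operators that are differential along $M$ and fibrewise algebraic, hence preserve the flat ideal $\calJ^\infty(X;M)$ and factor through the Whitney quotient, which is declared to entail the result. Your write-up is in fact more detailed than the paper's at exactly the point you single out --- the injectivity of $\sigma_X$ on $\calW_D(X)$, which the paper never addresses explicitly --- and your descent of Fedosov's homotopy identity $\tilde b-\frq(\sigma(\tilde b))=hD\tilde b$ is the standard argument that fills this in.
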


By the uniqueness property of the Fedosov connection with respect to the curvature form $\Omega_{D}$, 
we have the following functoriality property of the start products constructed in 
Thm.~\ref{thm:starprod}. 
\begin{proposition}\label{prop:functoriality}
  The Fedosov quantization of Whitney functions on closed subspaces of regular Poisson manifolds is 
  functorial in the following sense. 
  Let $\Phi : (N,\Lambda) \rightarrow (M,\Pi)$ be a Poisson map
  between regular Poisson manifolds which maps the closed subset 
  $Y\subset N$  to the closed subset $X\subset M$.  Assume that the restriction of $\Phi$ to each 
  symplectic leaf of $\Lambda$ is a (local) diffeomorphism, and further that $\nabla^N$ and $\nabla^M$ 
  are Poisson connections on $N$ respectively $M$ such that  
  $\nabla^N = \Phi^* \big( \nabla^M \big)$. Denote by $\calS$ the symplectic foliation on $M$, by 
  $\calR$ the symplectic foliation on $N$.
  Let $D^N$ resp.~$D^M$ be the corresponding Fedosov connection  with the curvature form $\Omega_{D^N}$ 
  resp.~$\Omega_{D^M}$ and the induced star product 
  $\star_Y$ resp.~$\star_X$. Assume that $ \Omega_{D^N}=\Phi^*\big(\Omega_{D^M}\big)$. Then the pullback 
  $\Phi^*:\fWeyl_\calS(M)\to \fWeyl_\calR (N)$ is an algebra morphism
  \[
    \Phi^*  :  \big( \calE^\infty (X;M) , \star_X \big) \rightarrow \big( \calE^\infty (Y;N),\star_Y \big)
  \]
 which is functorial and contravariant in $\Phi$ with the above mentioned properties. 
  
\end{proposition}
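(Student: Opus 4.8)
The plan is to reduce the whole statement to the single assertion that the pull-back $\Phi^*$ intertwines the two Fedosov connections, and then to transport this fact through the symbol isomorphisms of Theorem~\ref{thm:starprod}.

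First I would set up the pull-back at the level of Weyl algebra bundles. Since $\Phi$ carries the leaves of $\calR$ into the leaves of $\calS$ and restricts to a local diffeomorphism on each of them, the differential $d\Phi$ restricts to a fibrewise isomorphism $T\calR \to \Phi^* T\calS$. Dualizing and passing to completed symmetric powers yields, fibre by fibre, an isomorphism $\Phi^*_x : \fWeyl_{\calS,\Phi(x)} M \to \fWeyl_{\calR,x} N$, and hence a pull-back $\Phi^* : \Omega^\bullet \fWeyl_\calS(M) \to \Omega^\bullet \fWeyl_\calR(N)$. Because $\Phi$ is a Poisson map which is a leafwise local diffeomorphism, the leafwise symplectic forms satisfy $\omega^N = \Phi^* \omega^M$, so $\Phi^*_x$ carries $\circ_{\Phi(x)}$ to $\circ_x$; thus $\Phi^*$ is an algebra morphism for the Moyal--Weyl products $\circ$ on $\calW_\calS(M)$ and $\calW_\calR(N)$. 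Moreover $\Phi^*$ preserves the $(s,k)$-bigrading, hence the Fedosov degree $\deg_\textup{F}$ and the symbol map, i.e.~$\sigma \circ \Phi^* = \Phi^* \circ \sigma$. Finally, since $\Phi(Y)\subset X$, the pull-back sends $\calJ^\infty(X;M)$ into $\calJ^\infty(Y;N)$, so everything above descends to the Whitney quotients $\calE^\infty(X;\fWeyl_\calS M)$ and $\calE^\infty(Y;\fWeyl_\calR N)$.

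The heart of the argument is to show $\Phi^* D^M = D^N \Phi^*$. I would set $A^N := \Phi^* A^M$ and consider $\widetilde D := \nabla^N + \tfrac{i}{\hbar}[A^N,-]_\circ$ on $\Omega^\bullet\fWeyl_\calR(N)$. Using the hypothesis $\nabla^N = \Phi^*(\nabla^M)$, which gives $\nabla^N \Phi^* = \Phi^* \nabla^M$, together with the fact that $\Phi^*$ is a $\circ$-morphism, one checks directly that $\widetilde D \, \Phi^* = \Phi^* D^M$. It then remains to identify $\widetilde D$ with $D^N$, where I would invoke Fedosov's uniqueness statement recalled above. The normalization conditions transfer: $\Phi^*$ preserves $\deg_\textup{F}$, so $\deg_\textup{F}(A^N) \geq 2$; and since the tautological operators $\delta$ and $\delta^*$ are natural under fibrewise linear isomorphisms, both being built from the canonical element of $T^*\calS\otimes T\calS$, the pull-back commutes with $\delta^*$, whence $\delta^* A^N = \Phi^* \delta^* A^M = 0$. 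Because the curvature element is assembled from $\nabla$, $A$, $\delta$, $\omega$ and $\circ$ by natural operations, its formation commutes with $\Phi^*$, so $\Omega_{\widetilde D} = \Phi^* \Omega_{D^M}$, which by assumption equals $\Omega_{D^N}$. As $\Omega_{D^M}$ is a scalar, hence central, two-form, so is its pull-back, and therefore $\widetilde D^2 = \tfrac{i}{\hbar}[\Omega_{\widetilde D},-]_\circ = 0$, i.e.~$\widetilde D$ is abelian. Thus $\widetilde D$ and $D^N$ are both Fedosov connections for $\nabla^N$, satisfying the normalization and sharing the curvature $\Omega_{D^N}$, so uniqueness forces $\widetilde D = D^N$ and hence $\Phi^* D^M = D^N \Phi^*$.

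With the intertwining in hand the conclusion is formal. Descending $\Phi^* D^M = D^N\Phi^*$ to the Whitney level shows that $\Phi^*$ maps the flat subalgebra $\calW_{D^M}(X)$ into $\calW_{D^N}(Y)$, and there it is an algebra morphism, being a restriction of a $\circ$-morphism. Since $\sigma_Y \circ \Phi^* = \Phi^* \circ \sigma_X$ on flat sections, the quantization maps obey $\Phi^* \circ \frq_X = \frq_Y \circ \Phi^*$, so for $F,G \in \calE^\infty(X)$ one computes $\Phi^*(F\star_X G) = \sigma_Y\big(\Phi^*\frq_X F \circ \Phi^* \frq_X G\big) = (\Phi^* F)\star_Y(\Phi^* G)$; that is, $\Phi^*$ is an algebra morphism $(\calE^\infty(X),\star_X)\to(\calE^\infty(Y),\star_Y)$. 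Functoriality and contravariance, $(\Phi\circ\Psi)^* = \Psi^*\circ\Phi^*$ and $\id^* = \id$, then follow from the corresponding properties of the pull-back of Whitney functions and of the fibrewise maps. The step I expect to be the genuine obstacle is the identification $\widetilde D = D^N$: everything hinges on checking that each ingredient of Fedosov's recursion for the curvature and for $A$ — in particular the homotopy operator $\delta^*$ governing the normalization — is natural with respect to the leafwise pull-back, so that the uniqueness theorem applies.
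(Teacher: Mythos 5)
Your proposal is correct and follows essentially the same route as the paper: lift $\Phi^*$ to a morphism of the leafwise Weyl algebra bundles, use the uniqueness of the Fedosov connection with prescribed Poisson connection, normalization, and curvature to obtain the intertwining $D^N \circ \Phi^* = \Phi^* \circ D^M$, and then descend to flat sections and the Whitney quotients. The paper states the uniqueness step in a single sentence, whereas you have usefully unpacked it (constructing $\widetilde{D}$ from $\Phi^* A^M$, checking $\deg_\textup{F}$, $\delta^*$-normalization, centrality of the pulled-back curvature, and abelianness); this is a faithful elaboration of the same argument, not a different one.
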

\begin{proof}
Since $\Phi$ restricts to a (local) symplectic diffeomorphism between symplectic leaves, it is 
straightforward to check that the pullback map $\Phi^*: T^*M\to T^*N$ lifts to a morphism of the 
corresponding Weyl algebra bundles,
\[
  \Phi^*: \fWeyl_\calS (M)\to \fWeyl_\calR (N).
\]
As $\Phi$ is assumed to be compatible with the Poisson connections, 
i.e.~$\nabla^N = \Phi^* \big( \nabla^M \big)$, and also the curvature forms, 
i.e.~$\Omega_{D^N}=\Phi^*\big(\Omega_{D^M}\big)$, the uniqueness property of the Fedosov connection 
with respect to the curvature form and Poisson connection implies that 
\[
  D^N\circ \Phi^*=\Phi^*\circ D^M.
\] 
Hence, $\Phi^*$ restricts to an algebra morphism 
\[
  \Phi^*: \calW_{D^M}(M)\to \calW_{D^N}(N), 
\]
and therefore a morphism
\[
  \Phi^* : \big( \calE^\infty (X;M) , \star_X \big) \rightarrow \big( \calE^\infty (Y;N),
  \star_Y \big).
\]
\end{proof}
\section{Whitney functions on an orbit space and their quantization}
Assume that $G$ is a compact Lie group acting freely on the smooth manifold $M$, and denote by  
$\pi : M\rightarrow N$ the canonical projection onto the orbit space $N:= M/G$ which under our
assumption is a smooth  manifold as well. 
Let $X \subset M$ be a closed $G$-invariant subset, and $Y:=X/G$. Then $Y$ is a closed subset of $N$.
Under these assumptions, the following result holds true.
\begin{proposition}
  The canonical projection induces a natural identification
  \[
     \pi^* : \calE^\infty (Y ; N) \cong \calE^\infty (X;M)^G .   
  \]
  Here, $\calE^\infty (X;M)^G$ denotes the set of Whitney functions
  represented by $G$-invariant smooth functions, i.e.~the image of the space $\big(\calC^\infty (M)\big)^G $
  of $G$-invariant smooth functions on $M$ under the jet map $\jet_{X;M}$.
\end{proposition}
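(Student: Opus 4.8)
The plan is to show that $\pi^*$ is a well-defined algebra isomorphism onto the invariant subalgebra, treating surjectivity and injectivity separately, the latter being the crux. Since $G$ is compact and acts freely, the action is proper and $\pi : M \to N$ is a principal $G$-bundle, in particular a surjective submersion admitting local trivializations; moreover $\pi(X) = X/G = Y$. The pull-back map $\pi^*$ on Whitney functions has already been shown to be well defined and is multiplicative because it is induced by the ring homomorphism $f \mapsto f\circ\pi$. Because $\pi \circ g = \pi$ for every $g \in G$, any representative $f \circ \pi$ of $\pi^*\big(\jet_Y(f)\big)$ is $G$-invariant, so $\pi^*$ takes values in $\calE^\infty(X;M)^G$; it remains to prove that the resulting map onto $\calE^\infty(X;M)^G$ is bijective.

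\emph{Surjectivity.} First I would recall the standard fact that for a principal $G$-bundle the pull-back $\pi^* : \calC^\infty(N) \to \calC^\infty(M)^G$ is an isomorphism: a $G$-invariant smooth function on $M$ is constant along the fibers and therefore descends to a function on $N$ that is smooth because $\pi$ admits local smooth sections. Now an arbitrary element of $\calE^\infty(X;M)^G$ is by definition of the form $\jet_X(h)$ with $h \in \calC^\infty(M)^G$. Writing $h = f \circ \pi$ for a suitable $f \in \calC^\infty(N)$, we obtain $\jet_X(h) = \jet_X(f\circ\pi) = \pi^*\big(\jet_Y(f)\big)$, which proves surjectivity.

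\emph{Injectivity.} Here lies the main obstacle. Suppose $\pi^*\big(\jet_Y(f)\big) = 0$, i.e.\ $f \circ \pi \in \calJ^\infty(X,M)$ is flat on $X$; I must deduce that $f$ is flat on $Y$, i.e.\ that $(Df)_{|Y} = 0$ for \emph{every} differential operator $D$ on $N$. Since flatness is a local condition, I would fix $y_0 \in Y$, a preimage $x_0 \in X$, and choose a trivializing chart in which $\pi$ becomes the projection $(u,v) \mapsto u$, with $u$ the base coordinates and $v$ the fiber coordinates; in these coordinates $f \circ \pi$ does not depend on $v$. Writing $D = \sum_\alpha a_\alpha(u)\,\partial_u^\alpha$ locally, its horizontal lift $\widetilde D := \sum_\alpha (a_\alpha\circ\pi)\,\partial_u^\alpha$ is a genuine differential operator on $M$ satisfying $\widetilde D(f\circ\pi) = (Df)\circ\pi$.

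The key step is this realization of every differential operator on $N$, locally, as the restriction to pulled-back functions of a differential operator on $M$; it relies precisely on the local triviality of the submersion $\pi$. Granting it, flatness of $f\circ\pi$ on $X$ gives $\big(\widetilde D(f\circ\pi)\big)_{|X} = 0$, whence $\big((Df)\circ\pi\big)_{|X} = 0$, i.e.\ $(Df)\big(\pi(x)\big) = 0$ for all $x\in X$. Because $\pi(X) = Y$, this yields $(Df)_{|Y} = 0$. As $y_0 \in Y$ and $D$ were arbitrary, $f \in \calJ^\infty(Y,N)$, so $\jet_Y(f) = 0$ and $\pi^*$ is injective. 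I expect the only real care to be needed in checking that the horizontal lift computes $(Df)\circ\pi$ independently of the chart and in handling the flatness condition for operators of all orders simultaneously, both of which follow from the principal bundle structure.
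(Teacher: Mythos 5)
Your proof is correct, and on both substantive steps it takes a genuinely different route from the paper's. For surjectivity, the paper invokes the Schwarz--Mather theorem (split surjectivity of $\calC^\infty(N)\to\big(\calC^\infty(M)\big)^G$, $f\mapsto f\circ\pi$), which is the tool that remains available when the action is not free (cf.\ the remark following the proposition); you instead use the principal-bundle structure that freeness provides, descending invariant functions via local sections --- more elementary, but tied to the freeness hypothesis. For injectivity, the paper appeals to the multidimensional Fa\`a di Bruno formula; read literally, that computation establishes the implication ``$f\in\calJ^\infty(Y,N)$ implies $f\circ\pi\in\calJ^\infty(X,M)$'', which is the well-definedness of $\pi^*$, whereas injectivity of the induced map on quotients requires the converse implication ``$f\circ\pi\in\calJ^\infty(X,M)$ implies $f\in\calJ^\infty(Y,N)$'' (injectivity of $f\mapsto f\circ\pi$ on functions together with well-definedness is not sufficient for a map of quotients to be injective). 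Your argument --- working in a trivializing chart where $\pi$ is the projection $(u,v)\mapsto u$, lifting $D=\sum_\alpha a_\alpha\,\partial_u^\alpha$ to $\widetilde D=\sum_\alpha (a_\alpha\circ\pi)\,\partial_u^\alpha$ so that $\widetilde D(f\circ\pi)=(Df)\circ\pi$, and using $\pi(X)=Y$ --- proves exactly this needed converse, and it makes explicit where the submersion property of $\pi$ (not merely its surjectivity) enters. So your treatment of injectivity is the tighter one; what the paper's Fa\`a di Bruno/Schwarz--Mather toolkit buys in exchange is a formulation that points toward the general, non-free case.
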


\begin{proof}
  Observe first that the image of $\pi^*$ lies in $\calE^\infty (X;M)^G$ indeed by definition of the 
  pull-back of Whitney functions and since $f\circ \pi$ is $G$-invariant for every 
  $f\in \calC^\infty (N)$. Since $\pi$ is a surjective, the pull-back 
  $\calC^\infty (N) \rightarrow  \big( \calC^\infty (M) \big)^G$, $f \mapsto f\circ \pi $
  is injective. Hence $\pi^* :  \calE^\infty (Y ; N) \rightarrow \calE^\infty (X;M)^G$
  is injective as well, if we can yet show that $f\circ \pi \in \calJ^\infty (X,M)$ for
  $f\in \calJ^\infty (Y,N)$. But this follows from the multidimensional Fa\`a di Bruno formula, 
  cf.~\cite[Thm.~3.6]{MichTG}. More precisely, this formula says that for 
  $x\in X$, a coordinate system $(x_1, \ldots , x_n)$ around $x$, a coordinate system 
  $ (y_1, \ldots , x_m)$ around $\pi (x)$, and a multiindex $\gamma \in \N^n$ 
  the following equality holds true:
  \begin{align*}
    \partial^\gamma (f\circ \pi) = \!\!\!\!
    \sum_{\lambda = (\lambda_{i,\alpha}) \in \N^{m \times {\N^n \setminus \{ 0 \} }}
    \atop \sum \lambda_{i,\alpha} \alpha = \gamma} 
    \frac{\gamma !}{\lambda !} \: \prod_{\alpha \in \N^n \atop |\alpha | > 0 } \!\!\!\!
    \left(  \frac{1}{\alpha !}   \right)^{\sum_i \lambda_{i,\alpha}}
    \left(\partial^{\sum_{\alpha}(\lambda_{1,\alpha} ,\ldots , \lambda_{m,\alpha}}) f\right)\circ \pi \:
    \prod_{i,\alpha} \left(\partial^\alpha \pi_i \right)^{\lambda_{i,\alpha}}, 
  \end{align*}
  where $\pi_i$ denotes the $i$-th component function of $\pi$ (in a neighborhood $x$) 
  with respect to the coordinate system $y$ around $\pi (x)$. This implies that if all
  $\partial^{\sum_{\alpha}(\lambda_{1,\alpha} ,\ldots , \lambda_{m,\alpha}}) f$ vanish on $Y$ then
  $\partial^\gamma (f\circ \pi)$ vanishes on $X$. Hence $f\in \calJ^\infty (Y,N)$
  implies $f\circ \pi \in \calJ^\infty (X,M)$, and $\pi^*$ is injective. 
  Surjectivity of $\pi^*$ follows from the Theorem by {\sc Schwarz--Mather} \cite{SchwaSFIACLG,MatDI} 
  which in particularly  says that the map
  \[
    \calC^\infty (N) \rightarrow \big( \calC^\infty (M)\big)^G  , \: f \mapsto f \circ \pi 
  \]
  is split-surjective.
\end{proof}

\begin{remark}
  This result has been proven in the general case without the restriction of the $G$-action to be 
  free in \cite{HerPfl}. 
\end{remark}

Next we choose a $G$-invariant Poisson connection $\nabla$ on $M$ according to Thm.~\ref{PropPoissonConn}.
Let us also fix the $G$-invariant curvature form $\Omega = -\omega$, where $\omega$ denotes the
fiberwise symplectic form on $TS$. Then, by the preceeding section, there exists 
a uniquely determined Fedosov connection $D$ having the given curvature form $\Omega$. 
By construction, the connection $D$ is $G$-invariant as well. Let $\star$ denote the 
corresponding star product on $\calC^\infty (M)[[\hbar]]$. By invariance of $D$, the star product $\star$
is invariant as well, which means that for two $G$-invariant functions $f,g\in \calC^\infty (M)^G$ their star product 
$f\star g$ is also $G$-invariant. This observation together with the previous proposition entail the first two 
claims of the following result.

\begin{theorem}
\label{MainThm}
  The Fedosov star product $\star$ associated to a $G$-invariant Poisson connection $\nabla$ on $M$ (and to the 
  $G$-invariant curvature form $\Omega = -\omega$) is $G$-invariant, hence
  \begin{equation}
    \label{Eq:InvStarProdAlg}
    \Big( \big( \calE^\infty (X)[[\hbar]] \big)^G, \star \Big) 
  \end{equation}
  is a subalgebra of $\Big( \calE^\infty (X)[[\hbar]] , \star \Big)$. Moreover, under
  the  isomorphism $\pi^* : \calE^\infty (Y ; N) \cong \calE^\infty (X;M)^G $ one obtains
  a star product algebra
  \[
     \Big( \calE^\infty (Y)[[\hbar]] , \overline{\star} \Big) ,
  \]
  where $F\,\overline{\star}\, G$ for $F,G \in \calE^\infty (Y)$ is defined by  $(\pi^*)^{-1} \big( \pi^*(F) \star \pi^* (G)\big)$. 
  Finally, if $(N,\lambda)$ is a regular Poisson manifold, then  $\Big( \calE^\infty (Y)[[\hbar]] , \overline{\star} \Big)$
  is isomorphic to the Fedosov deformation quantization $(\calE^\infty (Y)[[\hbar]] , \star_{\nabla^N} )$
  corresponding to a Poisson connection $\nabla^N$ on $N$ and to the curvature form $-\omega^\calR$, 
  where $\omega^\calR$ denotes the leafwise symplectic form on the symplectic foliation $\calR$ of $N$.  
\end{theorem}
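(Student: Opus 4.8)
The plan is to deduce this from the functoriality of the Fedosov construction, Proposition~\ref{prop:functoriality}, applied to the projection $\pi\colon M\to N$ itself (playing the role of the map $\Phi$, with $M$ as source and $N$ as target). If the hypotheses of that Proposition can be arranged for $\pi$, then $\pi^*$ is an algebra morphism from $\bigl(\calE^\infty(Y;N),\star_{\nabla^N}\bigr)$ into $\bigl(\calE^\infty(X;M),\star\bigr)$; by the preceding Proposition its image is exactly the invariant subalgebra $\bigl(\calE^\infty(X;M)\bigr)^G$, and the product transported to $\calE^\infty(Y)[[\hbar]]$ through the linear isomorphism $\pi^*$ is by definition $\overline{\star}$. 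Thus establishing the Proposition's hypotheses for $\pi$ would immediately give the asserted isomorphism $\bigl(\calE^\infty(Y)[[\hbar]],\overline{\star}\bigr)\cong\bigl(\calE^\infty(Y)[[\hbar]],\star_{\nabla^N}\bigr)$.

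Accordingly, I would first produce the compatible data downstairs. Since $\nabla^M$ is $G$-invariant and $\pi$ is a principal $G$-bundle, I would show that $\nabla^M$ descends to a Poisson connection $\nabla^N$ on $N$ with $\nabla^M=\pi^*\nabla^N$, and I would take the curvature form on $N$ to be $-\omega^\calR$, checking that it pulls back to the chosen $G$-invariant curvature $-\omega^\calS$ on $M$. The induced bundle map $\pi^*\colon\fWeyl_\calR(N)\to\fWeyl_\calS(M)$ is then automatically a morphism for the Moyal--Weyl products, since $\pi$ is a Poisson map: this is a direct chain-rule computation showing that $\pi^*$ intertwines $\widehat{\Pi}$ on the two sides. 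Granting the matching of Poisson connections and curvatures, the uniqueness of the Fedosov connection for prescribed $\nabla$ and $\Omega$ (quoted before Theorem~\ref{thm:starprod}) forces $\pi^*\circ D^N=D^M\circ\pi^*$, exactly as in the proof of Proposition~\ref{prop:functoriality}. Hence $\pi^*$ carries flat sections to ($G$-invariant) flat sections, intertwines the symbol maps, and --- because the operators $\frq_k$ are differential operators --- descends to the Whitney quotients just as in Theorem~\ref{thm:starprod}, yielding the algebra isomorphism onto $\bigl((\calE^\infty(X)[[\hbar]])^G,\star\bigr)$.

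The step I expect to be the genuine obstacle is the geometric compatibility underlying the hypothesis that $\pi$ restrict to a leafwise local symplectomorphism. One must show that the symplectic foliation $\calR$ of $N$ is $\pi$-related to $\calS$, that $\omega^\calS=\pi^*\omega^\calR$, and that $\nabla^M$ indeed descends; this is precisely where the regularity of $N$ has to be used. The delicate case is that in which some $G$-orbit directions are tangent to the symplectic leaves of $M$: there the leafwise cotangent pullback $\pi^*\colon\pi^*T^*\calR\to T^*\calS$ degenerates and the naive identity $\omega^\calS=\pi^*\omega^\calR$ fails, so one cannot quote Proposition~\ref{prop:functoriality} verbatim. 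In that situation I would instead argue directly that the reduced product $\overline{\star}$ is a star product for the reduced Poisson structure $\lambda$ and that its characteristic (curvature) class equals $[-\omega^\calR]$, invoking Fedosov's uniqueness and classification to conclude that it is isomorphic to $\star_{\nabla^N}$. Once this compatibility is secured, the remainder is the bookkeeping indicated above, and composing $\pi^*$ with the preceding Proposition's identification completes the proof.
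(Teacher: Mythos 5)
There is a genuine gap, and it sits exactly where you placed your ``delicate case'': that case is not delicate, it is the generic and in fact the only interesting situation, and your fallback for it contains no argument. The primary route via Prop.~\ref{prop:functoriality} applied to $\pi\colon M\to N$ cannot work in general: the hypothesis that the map restrict to a (local) diffeomorphism on each symplectic leaf fails whenever $T\calO\cap T\calS\neq 0$, and in that situation $\pi$ does not even map leaves of $M$ into leaves of $N$. For example, if $M$ is symplectic with a Hamiltonian $G$-action (the paper's closing example), then $M$ is a single leaf whose image is all of $N$, which is a union of many lower-dimensional leaves; no bundle map $\fWeyl_\calR(N)\to\fWeyl_\calS(M)$ intertwining the fiberwise Moyal--Weyl products can be produced by the chain rule, because the leafwise ranks do not match. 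The transversality condition $T\calO\cap T\calS=0$ under which your main route would go through excludes precisely the cases the theorem is designed for, so the entire burden of proof falls on the fallback.

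Your fallback, however, is a restatement of what must be proved, not a proof: ``argue directly that the characteristic class of $\overline{\star}$ equals $[-\omega^\calR]$'' is the heart of the matter, and the paper's proof is essentially one long construction establishing exactly this. Concretely, the paper reduces to a single leaf via Prop.~\ref{IsoLeaf}, introduces the bundle $E$ spanned by $G$-invariant Hamiltonian vector fields (via the Ortega--Ratiu polar pseudogroup), proves the key inclusion $E\subset T\calO^\omega$, uses a $G$-invariant metric to split off a subbundle $W\subset E$ complementary to $E\cap T\calO$ on which $T\pi$ is a fiberwise linear \emph{symplectic} isomorphism onto $T\calR$, and then uses $W$-lifts of vector fields to manufacture a Poisson connection $\nabla^N$ compatible with $\nabla^M$; only with this compatible pair in hand can one compare the two Fedosov connections and read off that the class of $\overline{\star}$ is $\omega^\calR/\hbar$, after which the classification of star products on the regular Poisson manifold $N$ (Deligne/Fedosov/Neumaier) finishes the argument. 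Note also a smaller point your fallback omits but the paper needs: the equivalence supplied by the classification is a formal series of \emph{leafwise differential} operators, which is what lets it preserve $\calJ^\infty(Y;N)[[\hbar]]$ and hence descend to $\calE^\infty(Y)[[\hbar]]$; without this the isomorphism would be established upstairs on $\calC^\infty(N)[[\hbar]]$ but not on Whitney functions. So while your instincts about where the difficulty lies are sound, the proposal as written reduces the theorem to an unproved claim rather than proving it.
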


\begin{remark}
  The last statement of the theorem is a ``quantization commutes with reduction'' result for quantized 
  Whitney functions. 
  
  Note that in general, the Poisson manifold $N$ needs not be regular, hence the above theorem provides a  
  quantization method for Whitney functions on subsets of not necessarily regular Poisson manifolds which can be written
  as the quotient of a regular Poisson manifold  by a compact Lie group action. 
\end{remark}
Before proving the theorem, let us state some results needed in the proof. 
\begin{proposition}
\label{SympLinGeo}
  Let $(V,\omega)$ be a presymplectic vector space and $W\subset V$ a linear subspace. Then the following
  equality holds true:
  \[
    \dim W + \dim W^\omega = \dim V + \dim (W \cap V^\omega).
  \]
  Furthermore, if $\omega$ is non-degenerate and $W$ is symplectic, then $W^\omega$ is symplectic as well.
\end{proposition}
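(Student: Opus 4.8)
The plan is to reduce both assertions to standard annihilator bookkeeping via the flat map associated to $\omega$. First I would introduce $\omega^\flat : V \to V^*$, $v \mapsto \omega(v,-)$. Its kernel is by definition the radical $V^\omega$, and its image equals the annihilator $(V^\omega)^\perp \subset V^*$: the inclusion $\operatorname{im}\omega^\flat \subseteq (V^\omega)^\perp$ holds because $\omega(v,u)=-\omega(u,v)=0$ for $u \in V^\omega$, and equality then follows by comparing dimensions, since $\dim \operatorname{im}\omega^\flat = \dim V - \dim V^\omega = \dim (V^\omega)^\perp$.

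The key observation for the dimension formula is that $W^\omega = (\omega^\flat)^{-1}(W^\perp)$, where $W^\perp \subset V^*$ is the annihilator of $W$. Applying the elementary identity $\dim T^{-1}(S) = \dim \ker T + \dim\big(\operatorname{im} T \cap S\big)$ to $T = \omega^\flat$ and $S = W^\perp$ gives $\dim W^\omega = \dim V^\omega + \dim\big((V^\omega)^\perp \cap W^\perp\big)$. Now I would use the annihilator identity $(V^\omega)^\perp \cap W^\perp = (V^\omega + W)^\perp$ together with the Grassmann formula $\dim(V^\omega + W) = \dim V^\omega + \dim W - \dim(V^\omega \cap W)$. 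Substituting and simplifying collapses the right-hand side to $\dim V - \dim W + \dim(W \cap V^\omega)$, which is precisely the claimed equality after rearranging.

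For the second assertion I would specialize to $\omega$ non-degenerate, so $V^\omega = 0$ and the first part reduces to the usual $\dim W + \dim W^\omega = \dim V$. Iterating this gives $\dim (W^\omega)^\omega = \dim W$; since the inclusion $W \subseteq (W^\omega)^\omega$ is immediate, dimensions force the double-orthogonal identity $(W^\omega)^\omega = W$. The hypothesis that $W$ is symplectic means that the radical of $\omega|_W$, namely $W \cap W^\omega$, is trivial. But the radical of $\omega|_{W^\omega}$ is exactly $W^\omega \cap (W^\omega)^\omega = W^\omega \cap W = 0$, so $\omega|_{W^\omega}$ is non-degenerate and $W^\omega$ is symplectic.

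The main obstacle is purely a matter of correctly tracking the correction term $\dim(W \cap V^\omega)$ that arises from the degeneracy of $\omega$; everything rests on the two identifications $\operatorname{im}\omega^\flat = (V^\omega)^\perp$ and $(V^\omega)^\perp \cap W^\perp = (V^\omega + W)^\perp$. Once these are established the remainder is a formal dimension count, and the non-degenerate case is a clean corollary requiring only the double-orthogonality $(W^\omega)^\omega = W$.
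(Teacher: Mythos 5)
Your proof is correct in every step. The reduction of the dimension formula to the flat map $\omega^\flat : V \to V^*$ is sound: the identifications $\ker \omega^\flat = V^\omega$, $\operatorname{im}\omega^\flat = (V^\omega)^\perp$, and $W^\omega = (\omega^\flat)^{-1}(W^\perp)$ all check out, the fiber-dimension identity $\dim T^{-1}(S) = \dim\ker T + \dim(\operatorname{im}T \cap S)$ is applied correctly, and the combination with $(V^\omega)^\perp \cap W^\perp = (V^\omega + W)^\perp$ and the Grassmann formula yields exactly the claimed equality. The second part, via $(W^\omega)^\omega = W$ in the non-degenerate case and the observation that the radical of $\omega|_{W^\omega}$ is $W^\omega \cap W$, is likewise complete. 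For comparison: the paper offers no argument at all here --- its proof reads, in full, ``This is a straightforward argument in linear symplectic geometry'' --- so your write-up simply supplies the standard annihilator bookkeeping that the authors chose to omit, and it does so correctly.
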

\begin{proof}
  This is  a straightforward argument in linear symplectic geometry. 
\end{proof}

\begin{lemma}
  Any element $g\in G$ maps symplectic leaves of $M$ to symplectic leaves.  
\end{lemma}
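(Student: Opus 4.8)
The plan is to exploit that $G$ acts by Poisson maps, so that each $g \in G$ acts on $M$ as a Poisson diffeomorphism $\phi_g : M \to M$ satisfying $(\phi_g)_* \Pi = \Pi$. The key is to recall that the symplectic leaves of $(M,\Pi)$ are precisely the maximal integral manifolds of the characteristic distribution $T\calS = \Pi^\sharp(T^*M) \subset TM$, where $\Pi^\sharp : T^*M \to TM$ denotes the bundle map $\alpha \mapsto \Pi(\alpha,-)$ induced by the Poisson bivector. Since $\Pi$ is regular, $T\calS$ is a genuine constant-rank subbundle, and its integral leaves are exactly the symplectic leaves of $\calS$ introduced earlier.

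First I would record the infinitesimal consequence of $(\phi_g)_*\Pi = \Pi$, namely that the differential $d\phi_g$ intertwines the sharp maps at $p$ and $\phi_g(p)$:
\[
  d(\phi_g)_p \circ \Pi^\sharp_p = \Pi^\sharp_{\phi_g(p)} \circ \big( (d(\phi_g)_p)^{-1} \big)^* .
\]
Because $\big((d(\phi_g)_p)^{-1}\big)^*$ is a linear isomorphism of cotangent spaces, this identity shows that $d(\phi_g)_p$ carries $T_p\calS = \Pi^\sharp_p(T^*_pM)$ isomorphically onto $T_{\phi_g(p)}\calS = \Pi^\sharp_{\phi_g(p)}(T^*_{\phi_g(p)}M)$. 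Hence $\phi_g$ preserves the characteristic distribution $T\calS$ as a subbundle of $TM$.

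Equivalently, and perhaps more transparently, I would argue via Hamiltonian flows: for every $f \in \calC^\infty(M)$ the Poisson property yields $(\phi_g)_* X_f = X_{f\circ \phi_g^{-1}}$, so $\phi_g$ conjugates Hamiltonian flows to Hamiltonian flows. Since the symplectic leaf through a point $p$ is exactly the accessibility class of $p$ under the pseudogroup generated by Hamiltonian flows, $\phi_g$ sends the leaf through $p$ onto the leaf through $\phi_g(p)$. From either description the conclusion is immediate: a diffeomorphism preserving the involutive distribution $T\calS$ maps connected maximal integral manifolds to connected maximal integral manifolds.

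I do not expect a serious obstacle; the entire content is the compatibility displayed above, which is just the infinitesimal form of $(\phi_g)_*\Pi = \Pi$. The only point requiring a word of care is maximality of the image integral manifold, but this is automatic here: $\phi_g$ is a diffeomorphism whose inverse $\phi_{g^{-1}}$ is again Poisson and hence also preserves $T\calS$, so the image $\phi_g(L)$ of a leaf $L$ cannot be properly contained in a strictly larger integral manifold. Thus $\phi_g(L)$ is itself a symplectic leaf, as claimed.
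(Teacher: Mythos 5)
Your proposal is correct, and your second argument --- conjugating Hamiltonian flows via $(\phi_g)_* X_f = X_{f\circ\phi_g^{-1}}$ and describing the leaf through $p$ as the accessibility class of $p$ under piecewise Hamiltonian curves --- is essentially the paper's own proof: the paper takes a leaf $L$, notes that $\Pi_{|gL}$ is a maximal-rank Poisson bivector on $gL$ with symplectic form $g_*\omega$, and connects any two points of $gL$ by transporting a piecewise Hamiltonian curve in $L$ by $g$, using $G$-invariance of $\Pi$. Your first argument, via the intertwining identity $d(\phi_g)_p \circ \Pi^\sharp_p = \Pi^\sharp_{\phi_g(p)} \circ \big((d(\phi_g)_p)^{-1}\big)^*$ and preservation of the characteristic subbundle $T\calS$, is a genuinely different (and in this setting arguably cleaner) route: it exploits regularity of $\Pi$, so that $T\calS$ is an involutive constant-rank subbundle to which Frobenius theory applies, whence any diffeomorphism preserving $T\calS$ permutes the leaves of the foliation. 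The trade-off is that the flow-based argument works verbatim for non-regular Poisson manifolds, where the leaves are still accessibility classes but the characteristic distribution is singular, while the distribution argument avoids curve-chasing entirely but is tied to the regular case. You also make explicit the maximality point --- that $\phi_g(L)$ cannot sit properly inside a larger integral manifold because $\phi_{g^{-1}}$ is again Poisson --- a detail the paper's proof leaves implicit.
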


\begin{proof}
  Let $L \subset M$ be a symplectic leaf with symplectic form $\omega$. 
  Consider the connected submanifold $gL \subset M$, and two points $x,y \in gL$.
  Since $\Pi$ is $g$-invariant, the restriction $\Pi_{|gL}$ is a Poisson bivector on $gL$ of maximal rank,
  and its corresponding symplectic form coincides with $g_*\omega$. It remains to show that
  $x$ and  $y$ can be connected by a piecewise smooth curve whose smooth parts are integral curves of 
  Hamiltonian vector fields. But this is clear, since $g^{-1}x$ and $g^{-1}y$   are both elements 
  of the symplectic leaf $S$, hence can be connected within $L$ by a piecewise  smooth curve $\gamma$ whose 
  smooth parts are integral curves of Hamiltonian vector fields. The curve $g \gamma$ then connects
  $x$ and $y$ and has the desired properties by $G$-invariance of $\Pi$.
\end{proof}

\begin{proposition}
\label{IsoLeaf}
  For every symplectic leaf $L\subset M$ there exists a closed subgroup $H_L \subset G$ called the 
  \emph{isotropy group of $L$}  which leaves
  $L$ invariant and which has the property  that for each point $x\in L$ the fiber 
  $\pi^{-1} \big(\pi (x)\big)$ coincides with the orbit $H_Lx$. In other words, one has the natural 
  isomorphism $\pi (L) \cong L/H $.
\end{proposition}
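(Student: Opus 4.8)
The plan is to define $H_L$ as the set-theoretic stabilizer of the leaf,
\[
  H_L := \{ g \in G \mid gL = L \},
\]
and to verify the asserted properties. That $H_L$ is a subgroup is immediate, and by the preceding Lemma every $g \in G$ carries $L$ onto \emph{some} symplectic leaf, so the condition $gL = L$ is meaningful; in particular $H_L$ leaves $L$ invariant by construction.

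First I would establish the orbit-meets-leaf identity. Fix $x \in L$. Since $G$ acts freely, the fibre $\pi^{-1}(\pi(x))$ is precisely the orbit $Gx$. If $g \in G$ satisfies $gx \in L$, then $gx \in gL \cap L$; as $gL$ is again a symplectic leaf and distinct symplectic leaves are disjoint, $gL \cap L \neq \emptyset$ forces $gL = L$, that is $g \in H_L$. Conversely $g \in H_L$ gives $gx \in gL = L$. Hence
\[
  Gx \cap L = H_L x ,
\]
so the fibres of the restriction $\pi|_L \colon L \to \pi(L)$ are exactly the $H_L$-orbits, and passing to the quotient yields the natural identification $\pi(L) \cong L/H_L$.

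The main obstacle is to show that $H_L$ is \emph{closed} in $G$. I would approach this through the infinitesimal picture. Since $G$ preserves the symplectic foliation, each fundamental vector field $\xi_M$ is an infinitesimal automorphism of $T\calS$, so its class in the normal bundle $TM / T\calS$ is parallel along leaves for the Bott connection. Consequently, if $\xi_M(x) \in T_x L$ at a single point $x \in L$, then $\xi_M$ is tangent to $L$ everywhere on $L$, and
\[
  \frakh_L := \{ \xi \in \frakg \mid \xi_M(x) \in T_x L \}
\]
is a Lie subalgebra whose connected integral subgroup $H_L^0$ is contained in $H_L$, because the flow of such a $\xi_M$ preserves $L$. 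Identifying $Gx \cap L$ inside the compact embedded orbit $Gx \cong G$ as the integral manifold through $x$ of the left-invariant distribution determined by $\frakh_L$, one sees that $H_L$ is a union of cosets of $H_L^0$.

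Closedness then reduces to the statement that $Gx \cap L$ is closed in the compact orbit $Gx$: granting this, $H_L$ is a compact subgroup and hence closed in $G$. I expect this to be the crux, for a leaf of a regular foliation need not be embedded and its trace on the orbit could a priori be dense; it is precisely here that one must invoke that $L$ meets $Gx$ in a closed set, equivalently that the relevant leaf is an embedded closed submanifold, in order to rule out a dense immersed subgroup.
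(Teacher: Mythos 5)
Your construction is the same as the paper's: the paper observes (via the preceding lemma) that $G$ acts on the space $Z$ of symplectic leaves and defines $H_L$ as the isotropy group of the point $L\in Z$, i.e.\ exactly your $\{g\in G\mid gL=L\}$. The verification that the fibers of $\pi|_L$ are the $H_L$-orbits, which you carry out correctly via $Gx\cap L=H_Lx$ and disjointness of leaves, is compressed in the paper into the words ``has the desired properties.'' So for the group-theoretic and set-theoretic part, your proposal matches the paper and is complete.

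The closedness of $H_L$, which you flag as the crux and leave open, is not proved in the paper either: it is dismissed with ``Clearly, $H_L$ then is a closed subgroup of $G$.'' The implicit reasoning --- isotropy groups of continuous actions are closed --- requires the point $L$ to be closed in the leaf space $Z$, which happens precisely when the leaf $L$ is a closed subset of $M$; that is exactly the failure mode you anticipate. And your suspicion is justified: closedness is in fact \emph{false} under the stated hypotheses. Take $M=\R^3/\Z^3$ with the translation-invariant regular Poisson bivector $\Pi=(\partial_x+\alpha\,\partial_z)\wedge\partial_y$, $\alpha$ irrational, and let $G=\R/\Z$ act freely by translation in $z$; this action preserves $\Pi$, and $N=M/G=\R^2/\Z^2$ carries the standard symplectic structure, so even the regularity hypothesis on $N$ appearing in Thm.~\ref{MainThm} holds. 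The leaf through the origin is the dense cylinder $L=\{(t,y,\alpha t)\bmod \Z^3 : t,y\in\R\}$, and $gL=L$ precisely when $g\in\{n\alpha\bmod 1\mid n\in\Z\}$, so $H_L\cong\Z$ is a countable dense, non-closed subgroup of $S^1$ (consistent with your infinitesimal picture: here $\frakh_L=0$, and $H_L$ is a union of countably many cosets of the trivial $H_L^0$). The identifications $Gx\cap L=H_Lx$ and $\pi(L)\cong L/H_L$ survive in this example, but $H_L$ is neither closed nor compact. So the step you could not complete cannot be completed: Prop.~\ref{IsoLeaf} as stated needs an extra hypothesis, e.g.\ that the symplectic leaves of $M$ are closed subsets of $M$ --- in which case your reduction does work, since $g_nx\in L$, $g_n\to g$ gives $gx\in\overline{L}=L$, hence $gL=L$ --- and the omission matters downstream, because the proof of Thm.~\ref{MainThm} replaces $(M,G)$ by $(L,H_L)$ and tacitly treats $H_L$ as again a compact Lie group.
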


\begin{proof}
  By the preceeding lemma, the group $G$ acts on the space $Z$ of symplectic leaves of $M$. Let $H_L$ be 
  the 
  isotropy group of the point $L\in Z$. Clearly, $H_L$ then is a closed subgroup of $G$ and has the desired
  properties. 
\end{proof}

\begin{proof}[Proof of Thm.~\ref{MainThm}]
It only remains to prove the last claim which says that the star product algebras 
$\big( \calE^\infty (Y)[[\hbar]] , \star_{\nabla^N} \big)$ and 
$\big( \calE^\infty (Y)[[\hbar]] , \overline{\star} \big)$ are isomorphic when $N$ is regular Poisson. 
For this we use the well-known result \cite{DelDAFVS,FedDQIT,neumaier,BurDolWal} that on the regular 
Poisson manifold $N$, two deformation quantizations $\star$ and $\star'$ are isomorphic if and only if 
they have the same characteristic class in the formal cohomology $\omega/\hbar+H^2_\calS (N,\C[[\hbar]])$, 
where $\calS$ denotes the symplectic foliation. 
Precisely, this means that there exists a formal power series $G=1+\hbar D_1+\ldots$ of differential 
operators tangent to the leaves of $\calS$ such that 
\[
G^{-1}\left( G(f_1)\star G(f_2)\right)=f_1\star' f_2.
\]
Obviously, $G$ preserves the ideal $\calJ^\infty(Y;N)[[\hbar]]$, so it induces an isomorphism between $(\calE^\infty(Y),\star)$ and $(\calE^\infty(Y),\star')$. Therefore, the claim follows from the fact that both 
$\big( \calE^\infty (Y)[[\hbar]] , \star_{\nabla^N} \big)$ and 
$\big( \calE^\infty (Y)[[\hbar]] , \overline{\star} \big)$ 
have the same characteristic class, namely $\omega^\calR/\hbar$.

So finally it remains to prove that the characteristic class of $\overline{\star}$ is 
$\omega^\calR/\hbar$, indeed (for every initially chosen $G$-invariant Poisson connection $\nabla^M$
and every Poisson connection $\nabla^N$). To this end, it suffices to prove this claim for a particular
choice of $\nabla^M$ and $\nabla^N$. Fix a Poisson connection  $\nabla^M$. We first want to construct 
a ``compatible'' Poisson connection $\nabla^N$. 

Since $M$ is foliated into symplectic leaves and the connections act leafwise, it suffices to
prove the claim for each leaf separately. Due to Prop.~\ref{IsoLeaf} we can therefore 
assume  without loss of generality, that $M$ is symplectic, and $G$ acts by symplectomorphisms on $M$.
To prove the claim, we will decompose the tangent bundle $TM$ in appropriate $G$-invariant 
subbundles which then will allow a unique lift of vector fields on $N$ tangent to the symplectic 
foliation $\calR$ of $N$ to invariant vector fields on $M$ having values in a certain subbundle. 

To this end 
let $G'$ be the standard polar pseudogroup associated to $G$ as defined in {\sc Ortega--Ratiu} 
\cite[Sec.~5.5.1]{OrtRatMMHR}. In other words, $G'$ is the pseudogroup of local diffeomorphisms of $M$ 
generated by the flows of Hamiltonian vector fields of the form
 $X_f := \Pi \lrcorner df : U \rightarrow TM$, where $f \in \big( \calC^\infty( U)\big)^G$ and $U$ is a 
$G$-invariant open subset of $M$. According to 
\cite[Sec.~5.5.1 \& Thm.~11.4.4]{OrtRatMMHR}, the actions by $G$ and $G'$ commute, and the symplectic 
leaves of $M/G$ are given by the (piecewise) orbits of the induced $G'$-action on $M/G$. Let $E$ 
be the vector bundle generated by such (invariant) Hamiltonian vector fields $X_f$. Then $E$ together with 
the restriction of the symplectic form $\omega$ to $E$ is a pre-symplectic bundle over $M$. 
By construction, the bundle $E$ is mapped under $T\pi$ onto the tangent bundle $T\calR$ of the symplectic foliation 
of $N$. Moreover, 
\begin{equation}
\label{Eq:SympCompOrb}
   E \subset T \calO ^\omega ,
\end{equation}
since one has for every $w \in E_p$, $p\in M$ and every fundamental $X_\xi$ of an element $\xi \in \frakg$ the 
relation
\[
   \omega \big( w , X_\xi (p) \big) = \omega \big( X_f(p) , X_\xi (p) \big) =  (X_\xi f) (p) = 0 ,
\]
where the $G$-invariant smooth function $f$ on $M$ has been chosen such that  $w = X_f (p)$. 
Now choose a $G$-invariant riemannian metric $\eta$ on $M$, and let $W$ be the  orthogonal complement of 
$T\calO \cap E$ in $E$, where $\calO$ denotes the foliation of $M$ by the $G$-orbits. 
By the regularity assumption on the induced Poisson structure on $N$
it is clear that $W$ is a vector bundle indeed. By construction, $W$ is a $G$-invariant subbundle of $E$ complementary 
to $E\cap T\calO$. Since $T\calO$ is the kernel bundle of the tangent map of the projection, $T\pi$, it follows that 
$T\pi$ maps $W$ onto the tangent bundle $T\calR$ of the symplectic foliation in such a way that fiberwise, 
$T\pi_{|W} : W \rightarrow T\calR$ is a linear symplectic isomorphism. 
This observation allows us to construct for every vector field $X$ on $N$ which is tangent to
$\calR$ a unique lift $X^*:M\rightarrow W$ such that 
\[
  T\pi \, X^* (p) = X(\pi(p)) \quad \text{for all $p\in M$}.
\]
Now we can define a connection $\nabla^N$ on $T\calR$ by putting, for any two vector fields
$X,Y$ on $N$ tangent to the symplectic foliation $\calR$,
\[
  \nabla^N_X Y : = T\pi \, \nabla^M_{X^*} Y^* \: .
\]
Clearly, $\nabla^N$ is torsion-free, so we only need to check that $\nabla^N$ is a Poisson connection.
For $X,Y$ as before let $A_{X,Y} : M \rightarrow TM$ be the vector field
\[
  A_{X,Y} := \big( T\pi \, \nabla^M_{X^*} Y^* \big)^* - \nabla^M_{X^*} Y^*\: .
\]
By construction, $ A_{X,Y} (p) \in T_p\calO$ for all $p\in M$. This gives for the leafwise
symplectic form $\omega^\calR$ on $T\calR$ and smooth vector fields $X,Y,Z$ on $N$ tangent to $\calR$:
\[
\begin{split}
  Z \big( \omega^\calR & (X,Y)\big) (\pi(p)) \\ = \, & Z^* \big( \omega (X^*,Y^*)\big) (p) =
  \omega\big( \nabla^M_{Z^*}X^* , Y^* \big) (p) +  \omega\big( X^*, \nabla^M_{Z^*} Y^* \big) (p) = \\
  = \, & 
  \omega\big( (\nabla^N_{Z}X)^* , Y^* \big) (p) +  \omega\big( X^*, (\nabla^N_{Z^*} Y)^* \big) (p) + \\
  & + \omega\big( A_{Z,X}  ,  Y^* \big) (p) + \omega\big( X^* , A_{Z,Y} \big) (p) = \\
  = \, & 
  \omega^\calR \big( \nabla^N_{Z}X , Y \big) (\pi(p)) +  \omega^\calR \big( X, \nabla^N_{Z^*} Y \big) (\pi(p)),
\end{split}
\]
where the last equality follows from the fact that the vector fields $ A_{Z,X} $  and $ A_{Z,Y} $ 
are tangent to the orbit direction, and that the lifted vector fields $Y^*$ and $X^*$ lie in the symplectic 
orthogonal complement of the orbit direction by Eq.~\ref{Eq:SympCompOrb}. Hence, $\nabla^N$ is a Poisson connection.

Finally, observe that the leafwise symplectic form $\omega^\calR$ on $N$ and the symplectic form $\omega$
on $M$ are related by 
\[
  \omega (X^*,Y^*) (p) = \omega^\calR (X,Y) (\pi(p)),
\]
which implies that the induced Fedosov connections on $N$ and $M$ are related in an analogous fashion. 
This implies in particular that the characteristic classes of the star products 
$\overline{\star}$ and $\star_{\nabla^N}$ coincides in both cases with $\omega^\calR/\hbar$.
The proof is finished.
\end{proof}

\begin{remark}
  The proof of the theorem shows even more, namely that for the Poisson connection $\nabla^N$ constructed
  in the proof, the star products $\star_{\nabla^N}$ and  $ \overline{\star}$ 
  on $(\calE^\infty (Y)[[\hbar]]$ even coincide. 
\end{remark}

\begin{example}
  Let $(M,\omega,G,J)$ be a Hamiltonian system with free $G$-action, and 
  consider 
  the stratification of $\frakg^*$ with the coadjoint action by orbit types.
  Let $S_\circ \subset \frakg^* $ be the open dense stratum, and 
  put $U:= J^{-1} (S_\circ)$. Then the quotient $V:=U/G$ is 
  a regular Poisson manifold, and the above 
   ``quantization commutes with reduction'' result applies to any $G$-invariant
  closed $X \subset U$.
\end{example}
\section{outlook}
The results from the previous section indicate that methods of real algebraic geometry and singularity theory might 
be helpful in solving problems in Poisson geometry. In the follwoing list we describe some of the problems,
where we expect that combining methods from singularity theory with Poisson geometry could eventually lead to 
the solution of the outstanding questions. 

\begin{itemize}
\item Even though one can construct deformation quantizations of Whitney functions over singular sets as
      explained above, a full (deformation) quantization theory of algebras of smooth functions 
      over singular symplectic spaces is still lacking. Partial results exist, though, as  
      the papers \cite{BorHerPflHASRDQ,HerPflIyeESPQSLHTA} show, where deformation quantizations of a particular
      class of singular symplectically reduced spaces are constructed by homological perturbation theory. 
      More precisely, the algebra of smooth functions on the zero level set of a $G$-hamiltonian 
      system is resolved there by a Koszul complex  defined by the moment map (again, under certain assumptions 
      on the $G$-Hamiltonian system). The symmetry group $G$ acts in a natural way on the Koszul complex
      which allows to represent the algebra of smooth functions on the symplectically reduced 
      space as the cohomology group in degree $0$ of the so-called (classical) BRST complex. 
      Appropriately deforming the BRST complex eventually then gives rise to a deformation quantization on the 
      symplectically reduced space. Generalizing this idea, one expects that the Koszul resolution appearing in this construction
      needs to be replaced by a Koszul--Tate resolution having infinite length. Sophisticated methods from
      commutative algebra and singularity theory then might eventually lead to the construction of 
      star products on any symplectically reduced space.

\item There are certain no go theorems on the existence of embeddings of a given symplectic (or Poisson) stratified space 
      into a Poisson manifold, see \cite{EgiEBD,DavisEBD}. 
      It appears that methods from commutative algebra and singularity theory could share more 
      light on this phenomenon  and possibly will lead to a more precise characterization of the obstructions to such embeddings.  

\item Hochschild and cyclic homology theory of function algebras over singular spaces provide useful information on the existence
      of  deformations of these algebras, and are the essential ingrediants in the study of the underlying singular spaces
      by methods of noncommutative geometry invented by A.~Connes \cite{ConNDG}. Again, a synthesis of methods from singularity theory
      with those from differential geometry, and possibly even noncommutative geometry has already led to interesting results, see for example 
      \cite{nppt,PPT2ADV,PflPosTanQWF,PflPosTanMPOSPLG}, 
      and might lead to further new observations in either of these areas. Work on this is in progress, see \cite{HerPfl}.

\end{itemize}

\bibliographystyle{alpha}

\end{document}